\numberwithin{equation}{section}
\newtheorem{theorem}{Theorem}[section]
\newtheorem{proposition}[theorem]{Proposition}
\newtheorem{lemma}[theorem]{Lemma}
\newtheorem{corollary}[theorem]{Corollary}
\newtheorem{conjecture}[theorem]{Conjecture}
\theoremstyle{definition}
\newtheorem{definition}[theorem]{Definition}
\begin{document}

%
%

\title[Fully weighted zero-sum subsequences]{On the number of fully weighted zero-sum subsequences}

\author{Ab\' ilio  Lemos, Allan O. Moura, Anderson T. Silva and B. K. Moriya}

\address{Departamento de Matem\'{a}tica, Universidade Federal de Vi\c cosa, Vi\c cosa-MG, Brazil}

\email{abiliolemos@ufv.br\\allan.moura@ufv.br\\anderson.tiago@ufv.br}
\email{bhavinkumar@ufv.br}

\keywords{Finite abelian group, weighted subsequences; Davenport constant.}

\subjclass[2010]{20K01, 11B75}

\begin{abstract}
Let $G$ be a finite additive abelian group with exponent $n$ and $S=g_{1}\cdots g_{t}$
be a sequence of elements in $G$. For any element $g$ of $G$ and $A\subseteq\{1,2,\ldots,n-1\}$,
let $N_{A,g}(S)$ denote the number of subsequences $T=\prod_{i\in I}g_{i}$
of $S$ such that $\sum_{i\in I}a_{i}g_{i}=g$ , where $I\subseteq\left\{ 1,\ldots,t\right\} $
and $a_{i}\in A$. In this paper, we prove that $N_{A,0}(S)\geq2^{|S|-D_{A}(G)+1}$,
when $A=\left\{ 1,\ldots,n-1\right\} $, where $D_{A}(G)$ is the
smallest positive integer $l$, such that every sequence $S$ over
$G$ of length at least $l$ has nonempty subsequence $T=\prod_{i\in I}g_{i}$
such that $\sum_{i\in I}a_{i}g_{i}=0$, $I\subseteq\left\{ 1,\ldots,t\right\} $
and $a_{i}\in A$. Moreover, we classify the sequences such that $N_{A,0}(S)=2^{|S|-D_{A}(G)+1}$, where the exponent of $G$ is an odd number.
\end{abstract}

\maketitle

\section{Introduction}	

\hspace{0.6cm}Let $G$ be a finite additive abelian group with exponent
$n$ and $S$ be a sequence over $G$. The enumeration of subsequences
with certain prescribed properties is a classical topic in Combinatorial
Number Theory going back to Erd\"{o}s, Ginzburg and Ziv (see \cite{EGZ,Ger1,Ger2})
who proved that $2n-1$ is the smallest integer, such that every sequence
$S$ over a cyclic group $C_{n}$ has a subsequence of length $n$
with zero-sum. This raises the problem of determining the smallest
positive integer $l$, such that every sequence $S=g_{1}\cdots g_{l}$
has a nonempty zero-sum subsequence. Such an integer $l$ is called
the {\it Davenport constant of $G$} (see \cite{Dav, OlsonI}), denoted by $D(G)$,
which is still unknown for wide class of groups. In an analogous manner, for a nonempty subset $A\subseteq\mathbb{Z}\backslash\left\{ kn:k\in\mathbb{Z}\right\} $, Adhikari \textit{et. al.} (see \cite{Adh1}) defined an $A$-weighted Davenport constant, denoted by $D_A(G)$, to be a smallest $t\in \mathbb{N}$ such that every sequence $S$ over $G$ of length $t$ has nonempty $A$-weighted zero-sum subsequence.

For any $g$ of $G$, let $N_{A,g}(S)$ (when $A=\{1\}$ we write
$N_{g}(S)$) denote {\it the number of weighted subsequences} $T=\prod_{i\in I}g_{i}$
of $S=g_{1}\cdots g_{l}$ such that $\sum_{i\in I}a_{i}g_{i}=g$,
where $I\subseteq\left\{ 1,\ldots,l\right\} $ is a nonempty subset
and $a_{i}\in A$. In 1969, \cite{OlsonII} proved
that $N_{0}(S)\geq2^{|S|-D(G)+1}$ for every sequence $S$ over $G$
of length $|S|\geq D(G)$. Subsequently, several authors, including
\cite{Bal,Bia,Cao,Fur,Gao2,Gao3,Gao5,Gao6,Gry1,Gry2,Gui,kis} obtained a huge variety of results on the number of subsequences with
prescribed properties. Recently, Chang {\it et al.} \cite{Chang} found the lower bound
of $N_{g}(S)$ for any arbitrary $g$.

In this paper, we determine a lower bound for $N_{A,0}(S)$, where\linebreak{}
$A=\{1,2,\ldots,n-1\}$ and $G$ is an additive finite abelian group with exponent
$n$. We also characterize the structures of the extremal sequences
which attain the lower bound for the group $G$, such that $n$ is an odd number.

\section{Notations and terminologies}

\hspace{0.6cm} In this section, we will introduce some notations and terminologies. Let $\mathbb{N}_{0}$ be {\it the set
of non-negative integers}. For integers $a,b\in\mathbb{N}_{0}$, we
define $[a,b]=\left\{ x\in\mathbb{N}_{0}:a\leq x\leq b\right\} $.

For a sequence 
\[
S=\prod_{i=1}^{m}g_{i}\in\mathcal{F}\left(G\right),
\]
where $\mathcal{F}\left(G\right)$ is {\it the free abelian monoid} with
basis $G$, a \textit{subsequence} $T=g_{i_{1}\cdots}g_{i_{k}}$ of $S$, with
$I_{T}=\{i_{1},\ldots,i_{k}\}\subseteq[1,m]$ is denoted by $T|S$;
we identify two subsequences $S_{1}$ and $S_{2}$ if $I_{S_{1}}=I_{S_{2}}$.
Given subsequences $S_{1},\ldots,S_{r}$ of $S$, we define \textit{$\gcd(S_{1},\ldots,S_{r})$} to be the sequence indexed by 
$I_{S_{1}}\cap\cdots\cap I_{S_{r}}.$ We say that two subsequences
$S_{1}$ and $S_{2}$ are {\it disjoint} if $\gcd(S_{1},S_{2})=\lambda$, where
$\lambda$ refers to the \textit{empty sequence}. If $S_{1}$ and $S_{2}$
are disjoint, then we denote by $S_{1}S_{2}$ the subsequence with
set index $I_{S_{1}}\cup I_{S_{2}}$; if $S_{1}|S_{2}$; we denote
by $S_{2}S_{1}^{-1}$ the subsequence with set index $I_{S_{2}}\setminus I_{S_{1}}$.

Moreover, we define 
\begin{enumerate}
\item $\left|S\right|=m$ {\it the length of $S$}.
\item an {\it $A$-weighted sum} is a sum of the form $\sigma^{\mathbf{a}}\left(S\right)=\sum_{i=1}^{m}a_{i}g_{i}$, with
$\mathbf{a}=a_{1}\cdots a_{m}\in\mathcal{F}(A)$, where $\mathcal{F}\left(A\right)$
is the free abelian monoid with basis $A$. When $A=[1,n-1]$, we call $S$ a {\it fully weighted sequence}.  
\item $\sum_{A}\left(S\right)=\left\{ \sum_{i\in I}a_{i}g_{i}:\emptyset\neq I\subseteq\left[1,m\right]\mbox{ and }a_{i}\in A\right\} $, a {\it set of nonempty $A$-weighted subsums of $S$}. 
\end{enumerate}
According to the above definitions, we adopt the convention that $\sigma^{\mathbf{a}}\left(\lambda\right)=0$,
for any $\mathbf{a}\in\mathcal{F}(A)$. For convenience, we define
$\sum_{A}^{\bullet}\left(S\right)=\sum_{A}\left(S\right)\cup\left\{ 0\right\} $.

The sequence $S$ is called 
\begin{enumerate}
\item an {\it $A$-weighted zero-sum free sequence} if $0\notin\sum_{A}\left(S\right)$, 
\item an {\it $A$-weighted zero-sum sequence} if $\sigma^{\mathbf{a}}\left(S\right)=0$
for some $\mathbf{a}\in\mathcal{F}(A)$. 
\end{enumerate}
For an element $g\in G$, let 
\[
N_{A,g}\left(S\right)=\left|\left\{ I\subseteq\left[1,m\right]:\sum_{i\in I}a_{i}g_{i}=g,\:a_{i}\in A\right\} \right|
\]
denote {\it the number of subsequences $T$ of $S$} with $\sigma^{\mathbf{a}}\left(T\right)=g$
for some $\mathbf{a}\in\mathcal{F}(A)$. 
\begin{definition}
Let $n$ be the exponent of $G$, $g\in G$, $A\subseteq\mathbb{Z}\backslash\left\{ kn:k\in\mathbb{Z}\right\} $ and $S\in\mathcal{F}\left(G\right)$. We say $S$ is {\it $g$-complete
sequence with weight in $A$} if $N_{A,g}\left(S\right)\geq2^{\left|S\right|-D_{A}\left(G\right)+1}$. We call $S$ an {\it extremal $g$-complete sequence with respect to $A$} if\linebreak{} $N_{A,g}\left(S\right)=2^{\left|S\right|-D_{A}\left(G\right)+1}$.
Let us denote $C_{A,g}\left(\mathcal{F}\left(G\right)\right)$ as
the {\it set of all $g$-complete sequences with respect to $A$} and $EC_{A,g}\left(\mathcal{F}\left(G\right)\right)$
as the {\it set of all extremal $g$-complete sequences with respect to $A$}. 
\end{definition}
\vspace{0cm}

\begin{definition}
Let $n$ be the exponent of $G$ and $A\subseteq\mathbb{Z}\backslash\left\{ kn:k\in\mathbb{Z}\right\} $.
We say $G$ is a {\it $0$-complete group with respect to $A$} if $\mathcal{F}\left(G\right)=C_{A,0}\left(\mathcal{F}\left(G\right)\right)$. 
\end{definition}
When $A=\left\{ 1\right\} $, Olson \cite{OlsonII} proved that all finite
abelian groups are $0$-complete with respect to $A$. Chang {\it et al.} \cite{Chang}
proved, that, when $A=\left\{ 1\right\} $, if $g\in\sum_{A}^{\bullet}\left(S\right)$,
then $S\in C_{A,g}\left(\mathcal{F}\left(G\right)\right)$ and, if
$S$ is extremal $h$-complete sequence with respect to $A$ for some
$h\in G$, then $S$ is $g$-complete sequence with respect to $A$
for all $g\in G$. Moreover, they classified the sequences in $EC_{A,0}\left(\mathcal{F}\left(G\right)\right)$
to $G$ with odd order.

Here we present an example: Take an $A$-weighted zero-sum free sequence $U$
over $G$ with $|U|=D_{A}(G)-1$. Thus, for $S=U0^{\left|S\right|-D_{A}(G)+1}$
and for all $g\in\sum_{A}^{\bullet}\left(U\right)$, we have $S\in C_{A,g}\left(\mathcal{F}\left(G\right)\right)$
and $S\in EC_{A,0}\left(\mathcal{F}\left(G\right)\right)$.

\section{Lower bound}

We write the finite abelian group $G$ as direct sum $G=H\oplus C_{n}^{r}$
, where $C_{n}^{r}$ denotes $r$ copies of the cyclic group of order $n$ denoted by $C_{n}$ and $H=C_{n_{1}}\oplus\cdots\oplus C_{n_{t}}$ with $1<n_{1}|n_{2}|\cdots|n_{t}|n=exp(G)$
and $n_{t}<n$. 

To prove our first theorem, we need some auxiliary results, which are
as follows. 

\begin{proposition}\label{prop:Proposition-2.3}{[}Proposition 2.3 \cite{marc}{]}
 Let $G$ be a finite abelian group with exponent $n$, $A\subseteq\left[1,n-1\right]$
a nonempty subset and $b\in\mathbb{N}$. Then, $$D_{bA}(G)=D_{A}(\gcd(b,n)G).$$ 
\end{proposition}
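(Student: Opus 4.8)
The plan is to reduce the weighted problem over $G$ to a weighted problem over the subgroup $\gcd(b,n)G$ by means of multiplication by $b$. Write $d=\gcd(b,n)$ and first observe that, since $\exp(G)=n$, the element $bg$ depends only on $b$ modulo $n$, and that the image subgroup $bG:=\{bg:g\in G\}$ equals $dG=\gcd(b,n)G$. The inclusion $bG\subseteq dG$ is immediate from $d\mid b$, since $bg=(b/d)(dg)\in dG$. For the reverse inclusion I would use B\'ezout to write $d=bx+ny$ for some $x,y\in\mathbb{Z}$, so that $dg=x(bg)+y(ng)=x(bg)\in bG$, because $ng=0$ for every $g\in G$. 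Hence $bG=dG$.

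Next I would introduce the multiplication-by-$b$ homomorphism $\phi\colon G\to dG$, $g\mapsto bg$, which is surjective by the previous step. The heart of the argument is the elementary identity
$\sum_{i\in I}(ba_{i})g_{i}=\sum_{i\in I}a_{i}(bg_{i})$,
valid for any $g_{1},\dots,g_{m}\in G$, any $\emptyset\neq I\subseteq[1,m]$ and any weights $a_{i}\in A$. Consequently a subsequence of $S=\prod_{i=1}^{m}g_{i}$ indexed by $I$ is a $bA$-weighted zero-sum subsequence of $S$ if and only if the subsequence of $\phi(S)=\prod_{i=1}^{m}(bg_{i})$ indexed by the \emph{same} $I$, with the \emph{same} weights $a_{i}\in A$, is an $A$-weighted zero-sum subsequence of $\phi(S)$ (now viewed as a sequence over $dG$). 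In particular, $S$ has a nonempty $bA$-weighted zero-sum subsequence if and only if $\phi(S)$ has a nonempty $A$-weighted zero-sum subsequence.

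With this correspondence the two inequalities follow by matching lengths. For $D_{bA}(G)\le D_{A}(dG)$: given any $S\in\mathcal{F}(G)$ with $|S|=D_{A}(dG)$, the image $\phi(S)$ is a sequence over $dG$ of the same length, hence has a nonempty $A$-weighted zero-sum subsequence, which pulls back to a nonempty $bA$-weighted zero-sum subsequence of $S$. For the reverse inequality $D_{A}(dG)\le D_{bA}(G)$: given any sequence $T=\prod_{i=1}^{m}h_{i}$ over $dG$ with $|T|=D_{bA}(G)$, I would use the surjectivity of $\phi$ to choose $g_{i}\in G$ with $bg_{i}=h_{i}$ and set $S=\prod_{i=1}^{m}g_{i}\in\mathcal{F}(G)$; since $|S|=D_{bA}(G)$, $S$ has a nonempty $bA$-weighted zero-sum subsequence, and the same index set and weights give a nonempty $A$-weighted zero-sum subsequence of $T=\phi(S)$. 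Combining both bounds yields $D_{bA}(G)=D_{A}(\gcd(b,n)G)$.

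The argument is essentially formal once the identity $bG=\gcd(b,n)G$ is established, so the step I expect to require the most care — rather than a genuine obstacle — is the degenerate situation $n\mid b$, where $d=n$, the subgroup $dG$ is trivial, and the weight set $bA$ consists of multiples of $n$. There one must check that the adopted conventions force both constants to equal $1$, so that the identity still holds, and more generally verify that $bA$ meets the hypotheses under which $D_{bA}(G)$ is defined.
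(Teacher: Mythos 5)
The paper does not prove this statement; it is imported verbatim as Proposition~2.3 of the cited reference of Marchan, Ordaz and Schmid, so there is no in-paper proof to compare against. Your argument is correct and is the standard one (and essentially the one in that reference): the identity $bG=\gcd(b,n)G$ via B\'ezout, the transfer identity $\sum_{i\in I}(ba_i)g_i=\sum_{i\in I}a_i(bg_i)$ giving a bijective correspondence between $bA$-weighted zero-sum subsequences of $S$ and $A$-weighted zero-sum subsequences of its image under multiplication by $b$, and the two inequalities obtained by pushing forward an arbitrary sequence over $G$ and lifting an arbitrary sequence over $\gcd(b,n)G$ through the surjection. You are also right to single out the degenerate case $n\mid b$, where $bA$ falls outside the set $\mathbb{Z}\setminus\{kn:k\in\mathbb{Z}\}$ required by the paper's definition of the weighted Davenport constant; under the natural convention both sides equal $1$, but strictly speaking the hypothesis $b\in\mathbb{N}$ should be read with $n\nmid b$ (in the paper's only application, Corollary~\ref{Cor.2}, one has $b=d$ a proper divisor of $n$, so no issue arises).
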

\begin{corollary}
\label{Cor.2} Let $G$ be a finite abelian group with exponent $n$
and $A\subseteq\left[1,n-1\right]$ be a nonempty subset, such that
$A\cup\left\{ 0\right\} $ is a proper subgroup of $\mathbb{Z}_{n}$.
Then, $D_{A}(G)=D_{A'}(dG)$, where $A=dA'$ and $A'\cup\left\{ 0\right\} \cong\mathbb{Z}_{n/d}$.
\end{corollary}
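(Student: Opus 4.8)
The plan is to reduce the whole statement to Proposition~\ref{prop:Proposition-2.3}, so the first task is to make the structure of $A$ completely explicit. Since $A$ is nonempty and $A\cup\{0\}$ is a proper subgroup of $\mathbb{Z}_n$, it is a nontrivial proper subgroup (nontrivial because $A$ contains a nonzero element), hence of the form $d\mathbb{Z}_n=\{0,d,2d,\ldots,(n/d-1)d\}$ for a divisor $d$ of $n$ with $1<d<n$. I would then read off $A=\{d,2d,\ldots,(n/d-1)d\}=d\cdot[1,n/d-1]$, so that setting $A':=[1,n/d-1]$ gives $A=dA'$ and $A'\cup\{0\}=[0,n/d-1]\cong\mathbb{Z}_{n/d}$, exactly the $d$ and $A'$ named in the statement.

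Second, I would check the hypotheses needed to invoke Proposition~\ref{prop:Proposition-2.3} for the pair $(A',d)$. The proposition requires the weight set to lie in $[1,n-1]$; since $d>1$ we have $n/d-1<n-1$, whence $A'=[1,n/d-1]\subseteq[1,n-1]$, and clearly $d\in\mathbb{N}$. Applying the proposition with $b=d$ then gives $D_{dA'}(G)=D_{A'}(\gcd(d,n)G)$.

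Third, I would simplify the right-hand side using $d\mid n$, which forces $\gcd(d,n)=d$ and hence $D_{A'}(\gcd(d,n)G)=D_{A'}(dG)$. Combining this with $A=dA'$ yields
\[
D_A(G)=D_{dA'}(G)=D_{A'}(dG),
\]
which is the claimed equality.

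There is no genuine analytic obstacle here: the argument is purely a translation of the subgroup hypothesis on $A$ into the scaling form $A=dA'$ demanded by Proposition~\ref{prop:Proposition-2.3}. The only place requiring care is the bookkeeping around $d$ — confirming that $A$ is the \emph{full} set of nonzero multiples of $d$ in $\mathbb{Z}_n$ (so that $A=d[1,n/d-1]$ rather than a proper subset), and that $d\mid n$ indeed gives $\gcd(d,n)=d$; once these are pinned down the conclusion is immediate. It is worth noting for later use that $\exp(dG)=n/d$, so $A'=[1,\exp(dG)-1]$ is precisely the fully weighted set for the smaller group $dG$; thus the corollary reinterprets $D_A(G)$ as a fully weighted Davenport constant on $dG$.
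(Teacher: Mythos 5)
Your proposal is correct and takes essentially the same route as the paper: identify $d$ as the generator of the proper subgroup $A\cup\{0\}=\langle d\rangle$, note $\gcd(d,n)=d$, and apply Proposition~\ref{prop:Proposition-2.3} with $b=d$ to get $D_A(G)=D_{dA'}(G)=D_{A'}(dG)$. Your version is in fact slightly more careful than the paper's, since you explicitly exhibit $A'=[1,n/d-1]$ and verify the hypothesis $A'\subseteq[1,n-1]$ before invoking the proposition.
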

\begin{proof}
Since $A\cup\left\{ 0\right\} $ is a proper subgroup of $\mathbb{Z}_{n}$,
then there is $d\in\mathbb{Z}_{n}$ such that $A\cup\left\{ 0\right\} =\left\langle d\right\rangle $
and $d$ is the minimum with this property. We observe that $d=\gcd\left(d,n\right)$.
Now, we apply the Proposition \ref{prop:Proposition-2.3} with $b=d$
and obtain $D_{A}(G)=D_{A'}(dG)$, where $A=dA'$. Since $d\mathbb{Z}_{n/d}=d\left\{ 0,1,2,\dots,\dfrac{n}{d}-1\right\} \cong\left\langle d\right\rangle =A\cup\left\{ 0\right\} =d\left(A'\cup\left\{ 0\right\} \right)$,
then $A'\cup\left\{ 0\right\} \cong\mathbb{Z}_{n/d}$. 
\end{proof}
An immediate consequence of the Corollary \ref{Cor.2} is that we
could consider only fully weight instead of a proper subgroup.

\begin{lemma}\label{L1}{[}Theorem 5.2 \cite{marc}{]} Let $G=H\oplus C_{n}^{r}$, where $H=C_{n_{1}}\oplus\cdots\oplus C_{n_{t}}$ with
$1<n_{1}|n_{2}|\cdots|n_{t}|n=exp(G)$ and $n_{t}<n$. Then, $D_{A}(G)=r+1$. 
\end{lemma}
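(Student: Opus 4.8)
The plan is to establish the two bounds $D_A(G)\ge r+1$ and $D_A(G)\le r+1$ separately, writing $A=[1,n-1]$ throughout and, when $H\ne 0$, setting $n_t=\exp(H)$ and $m=n/n_t$. For the lower bound I would take the standard generators $e_1,\dots,e_r$ of the $r$ copies of $C_n$ and consider $S=e_1\cdots e_r$. For every nonempty $I\subseteq[1,r]$ and all weights $a_i\in[1,n-1]$, the element $\sum_{i\in I}a_i e_i$ has $i$-th coordinate $a_i\not\equiv 0\pmod n$ for each $i\in I$, hence is nonzero. Thus $S$ is $A$-weighted zero-sum free of length $r$, giving $D_A(G)\ge r+1$.

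The heart of the upper bound is the special case of a group of the form $C_m^r$ with full weights, which I would settle by counting. Given $w_1,\dots,w_{r+1}\in C_m^r$, the $m^{r+1}$ tuples $(a_1,\dots,a_{r+1})\in[0,m-1]^{r+1}$ map, via $(a_i)\mapsto\sum_i a_i w_i$, into the subgroup $\langle w_1,\dots,w_{r+1}\rangle$, whose order is at most $m^r<m^{r+1}$. Two distinct tuples must therefore collide; subtracting them and reducing the coefficients modulo $m$ yields integers $c_i\in[0,m-1]$, not all zero, with $\sum_i c_i w_i=0$. Restricting to the support $I=\{i:c_i\ne 0\}$ produces a nonempty fully weighted zero-sum subsequence, so $D_{[1,m-1]}(C_m^r)\le r+1$ (and equality holds by the lower bound above).

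To finish I would pass from $G$ to $C_m^r$ by exploiting weights divisible by $n_t$, which annihilate $H$ automatically. Since $n_t[1,m-1]\subseteq[1,n-1]$ and every $n_t[1,m-1]$-weighted zero-sum is in particular an $[1,n-1]$-weighted one, monotonicity of the weighted Davenport constant in the weight set gives $D_{[1,n-1]}(G)\le D_{n_t[1,m-1]}(G)$. Applying Proposition \ref{prop:Proposition-2.3} with $b=n_t$ then yields $D_{n_t[1,m-1]}(G)=D_{[1,m-1]}(\gcd(n_t,n)G)=D_{[1,m-1]}(n_t G)$, where $n_t G=(n_t H)\oplus(n_t C_n^r)=0\oplus(n_t C_n)^r\cong C_{n/n_t}^r=C_m^r$ because $n_t H=0$ and $n_t C_n\cong C_{n/n_t}$. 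Combining this with the base case gives $D_A(G)\le r+1$, and together with the lower bound, $D_A(G)=r+1$. When $H$ is trivial the reduction is unnecessary and the base case applies directly with $m=n$.

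The single delicate point, and the reason a straightforward pigeonhole over all of $G$ fails, is that $|G|=|H|\,n^r$ may be at least $n^{r+1}$, so counting over every weight tuple gives no collision. The decisive idea is to count not over all weights but only over those divisible by $n_t$: these kill the $H$-component for free and collapse the problem onto $n_t C_n^r\cong C_{n/n_t}^r$, where $|{\cdot}|=m^r<m^{r+1}$ and the counting does succeed. I expect formalizing this annihilation step cleanly — and verifying that the admissible weights $n_t b_i$ with $b_i\in[1,m-1]$ indeed lie in $[n_t,n-n_t]\subseteq[1,n-1]$ — to be the main thing requiring care, the rest being the routine counting and the cited Proposition \ref{prop:Proposition-2.3}.
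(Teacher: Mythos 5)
Your argument is correct, but note that the paper itself offers no proof of this lemma: it is imported verbatim as Theorem 5.2 of \cite{marc}, so there is no internal proof to compare against. What you have written is a valid self-contained derivation. The lower bound via the independent generators $e_1,\dots,e_r$ of the $C_n^r$ part is exactly right, and the upper bound correctly isolates the real content: a pigeonhole count over the $m^{r+1}$ weight tuples in $[0,m-1]^{r+1}$ mapping into a group of order $m^r$, preceded by the reduction that kills $H$ by using only weights divisible by $n_t=\exp(H)$. Your verification that $n_t[1,m-1]\subseteq[1,n-1]$ and the monotonicity $D_{[1,n-1]}(G)\le D_{n_t[1,m-1]}(G)$ are the right things to check. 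One small remark: the detour through Proposition \ref{prop:Proposition-2.3} is not strictly necessary — you can run the same pigeonhole directly on $G$ by sending $(b_1,\dots,b_{r+1})\in[0,m-1]^{r+1}$ to $\sum_i n_t b_i w_i\in n_tG\cong C_m^r$, which has order $m^r<m^{r+1}$; this merges the reduction and the counting into one step and avoids relying on the cited proposition. Also, in the base case you do not need the subgroup $\langle w_1,\dots,w_{r+1}\rangle$ at all, since the ambient group $C_m^r$ already has order $m^r$. The degenerate cases ($H$ trivial, or $r=0$) are handled as you indicate.
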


A subsequence $T$ of $S$ is called a {\it maximal $A$-weighted zero-sum free subsequence} if 
$T$ is a subsequence of maximal length such that $T$ is $A$-weighted zero-sum free.

Below, we present an important result for the fully weighted Davenport
constant.

\begin{theorem}\label{T1}
\label{prop:cpr} All finite abelian group $G$ with exponent $n$
is $0$-complete with respect to $A=\left[1,n-1\right]$. 
\end{theorem}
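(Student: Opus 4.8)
The plan is to realise the lower bound as a counting of explicitly constructed weighted zero-sum subsequences, one for each subset of a ``free part'' of $S$. First I would choose a maximal $A$-weighted zero-sum free subsequence $T$ of $S$ and set $W=ST^{-1}$. Since any $A$-weighted zero-sum free sequence has length at most $D_A(G)-1$, this gives $|W|=|S|-|T|\ge |S|-D_A(G)+1$. It therefore suffices to attach to every subset $W'$ of the index set of $W$ a single $A$-weighted zero-sum subsequence $I(W')$ of $S$ whose intersection with the index set of $W$ is exactly $W'$: the assignment $W'\mapsto I(W')$ is then injective (recover $W'$ by intersecting with $W$), so $N_{A,0}(S)\ge 2^{|W|}\ge 2^{|S|-D_A(G)+1}$, which is precisely $S\in C_{A,0}(\F(G))$.

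The engine that makes this possible, and the step I expect to be the main obstacle, is the claim that $\sum_{A}^{\bullet}(T)$ is a \emph{subgroup} of $G$; this is exactly where the hypothesis $A=[1,n-1]$ is essential. It contains $0$ by definition, and it is closed under negation because $-\sigma^{\mathbf a}(T)=\sum_i(n-a_i)g_i$ while $a_i\in[1,n-1]$ forces $n-a_i\in[1,n-1]=A$. The delicate part is closure under addition: given two weighted subsums of $T$, on each index where they overlap I would add the two weights and reduce modulo the order of that element; since $\ord(g)\mid n$, a nonzero reduced residue again lands in $[1,n-1]=A$, whereas a zero residue simply deletes that index. Thus the sum of two elements of $\sum_{A}^{\bullet}(T)$ is again a single weighted subsum of $T$ (or $0$), establishing closure.

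With the subgroup property in hand the rest is assembly. Using maximality of $T$, for each $w\in W$ the sequence $Tw$ is not $A$-weighted zero-sum free, and since $T$ itself is, any witnessing relation must use $w$; this produces a fixed weight $a_w\in A$ with $a_w w\in\sum_{A}^{\bullet}(T)$. Then for any $W'\subseteq W$ the subgroup property yields $\sum_{w\in W'}a_w w\in\sum_{A}^{\bullet}(T)$, hence also its negative, which I write as a weighted subsum $\sigma^{\mathbf b}(T')$ for some $T'\mid T$. The subsequence $I(W')=W'T'$ is then $A$-weighted zero-sum and meets $W$ in exactly $W'$, as required, and distinct $W'$ give distinct $I(W')$.

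Finally I would dispose of the boundary case $|S|\le D_A(G)-1$, where $T$ may be all of $S$, so $|W|=0$ and the inequality reduces to $N_{A,0}(S)\ge 1$, which holds because the empty subsequence is always counted. The genuinely delicate point remains the closure-under-addition half of the subgroup claim, since that is where one must check that overlapping indices recombine into admissible weights; once that is secured, the construction of the $I(W')$ and the final count are routine.
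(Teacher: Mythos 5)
Your proposal is correct and follows essentially the same route as the paper: decompose $S=TW$ with $T$ a maximal $A$-weighted zero-sum free subsequence, use maximality to write each $a_w w$ ($w\mid W$) as a weighted subsum of $T$, and complete every subset $W'$ of $W$ to a weighted zero-sum subsequence $W'T'$, yielding $2^{|W|}\geq 2^{|S|-D_{A}(G)+1}$ distinct index sets. The only differences are cosmetic: you isolate the closure of $\sum_{A}^{\bullet}(T)$ under addition and negation as an explicit claim (the paper uses it tacitly in its Cases 2 and 3), and your uniform argument avoids the paper's case split on element orders and on $|T|<r$ versus $|T|=r$, which leans on Lemma \ref{L1}.
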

\begin{proof}
According to Lemma \ref{L1}, we can write $D_{A}(G)=r+1$.
If $|S|\leq r$, then $N_{A,0}(S)\geq1\geq 2^{|S|-r}$.
If $|S|=r+1$, then there is an $A$-weighted zero-sum nonempty subsequence $T$ of $S$. Thus, $N_{A,0}(S)\geq 2=2^{|S|-r}$. Notice that if there is no $T|S$, such that $T$ is no a maximal $A$-weighted zero-sum free with $|T|=r$, then $N_{A,0}(S)>2^{|S|-r}$.

Suppose now $r+1<|S|$.

 We divide the proof in three cases:

\textbf{Case 1:} Let $S\in\mathcal{F}(G)$ be a sequence such that $o(g)<n$
for all $g|S$:

In this case, we have that each $g|S$ is an $A$-weighted zero-sum subsequence
and so $N_{A,0}(S)=2^{|S|}>2^{|S|-r}$.

\textbf{Case 2:} Let $S=TW\in\mathcal{F}(G)$ be a sequence such that
the elements of $T$ have order $n$ and $T$ is maximal $A$-weighted zero-sum free with $|T|<r$.

Then, for each element $g|W$, we have two possibilities:

\textbf{a)} If $o(g)<n$ and for $a_{g}=o(g)\in A$, then $g$ is an $A$-weighted zero-sum
subsequence.

\textbf{b)} If $o(g)=n$, then $Tg$ has an $A$-weighted zero-sum subsequence with $g$ being one of its elements.

In both possibilities, there is $V|Tg$, such that $V$ is an $A$-weighted zero-sum subsequence
whose coefficient of $g$ is $a_{g}\in A$. Then, $a_{g}g$ is an $A$-weighted sum
of some subsequence of $T$:$$a_{g}g={\sum}_{i\in I_g}a_i g_i; I_g\subset I_T.$$ Thus, for every $U|W$ nonempty, we have
$${\sum}_{g|U}a_{g}g={\sum}_{g|U} {\sum}_{i\in I_g}a_i g_i={\sum}_{g_i\in I_V}b_i g_i; I_V\subset I_T, \,\mbox{with}\, a_i,b_i\in A,$$
i.e., the $A$-weighted sum ${\sum}_{g|U}a_{g}g$ is an $A$-weighted sum of some subsequence $V_{U}$ of $T$. Therefore, $UV_{U}$ is an $A$-weighted zero-sum subsequence of $S$. Notice that if $V_{U}=\lambda$, then $U$ is an $A$-weighted zero-sum subsequence. Therefore, if we include the empty subsequence, we obtain
a minimum of $2^{|W|}=2^{|S|-|T|}$ distinct $A$-weighted zero-sum subsequences
of $S$. This proves that $N_{A,0}(S)>2^{|S|-r}$. 

\textbf{Case 3:} Let $S=TW\in\mathcal{F}(G)$ be a sequence such that
 $T$ is a maximal $A$-weighted zero-sum free and $|T|=r$.

As in Case $2,$ we obtain a minimum of $2^{|W|}=2^{|S|-|T|}$ distinct $A$-weighted zero-sum subsequences of $S$. Therefore, $N_{A,0}(S)\geq2^{|S|-r}$.
\end{proof}

\section{The structures of extremal sequences on the fully weighted\protect \linebreak{}
Davenport constant}

\global\long\def\labelenumi{(\roman{enumi})}

In this section, we consider that $G$ is of the form $H\oplus C_{n}^{r}$ with $\exp(H)<n$ and we will study sequence $S$, such that $N_{A,0}\left(S\right)=2^{\left|S\right|-D_{A}\left(G\right)+1}$ where $A=\left[1,n-1\right]$. The case $A=\{1\}$, for the general group of odd exponent, was studied by Chang {\it et al.} \cite{Chang}. 

Because $N_{A,0}\left(S\right)=2N_{A,0}\left(S0^{-1}\right)$ and if $o(g)<n$, then $N_{A,0}\left(S\right)=2N_{A,0}\left(Sg^{-1}\right)$,
it suffices to consider sequences $S$, such that $0\nmid S$ and $o(g)=n$ for all $g|S$. 

\begin{proposition}
\label{formadasequencia}
Let $G$ be a finite abelian group with $exp(G)=n$. If $S\in EC_{A,g}\left(\mathcal{F}\left(G\right)\right)$, with $A=[1,n-1]$, $0\nmid S$ and $o(g)=n$ for all $g|S$, then $r\leq\left|S\right|$ and there is $T=\prod_{i=1}^{r}g_i$ a maximal $A$-weighted zero-sum free, such that 
\begin{equation}
S=\prod_{i=1}^{r}g_{i}\prod_{j=1}^{k}h_{j},\label{eq:1-1}
\end{equation}
where $k\in\mathbb{N}_0$ and $b_{j}h_{j}=\sum_{i\in I_{j}}a_{i}g_{i}$
with $a_{i},b_{j}\in A$, $I_{j}\subset\left[1,r\right]$.
\end{proposition}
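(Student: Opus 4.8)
The plan is to combine the integrality of the counting function with the case analysis already carried out in the proof of Theorem~\ref{T1}. Since $D_{A}(G)=r+1$ by Lemma~\ref{L1}, the hypothesis $S\in EC_{A,g}(\mathcal{F}(G))$ reads $N_{A,g}(S)=2^{|S|-r}$. As $N_{A,g}(S)$ is a nonnegative integer, the equality $N_{A,g}(S)=2^{|S|-r}$ forces $2^{|S|-r}\in\mathbb{N}$, whence $|S|\geq r$; in particular $N_{A,g}(S)\geq 1$, so $g\in\sum_{A}^{\bullet}(S)$. This disposes of the claim $r\leq|S|$ with no further work.

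Next I would fix $T=\prod_{i\in I_{T}}g_{i}$, a maximal $A$-weighted zero-sum free subsequence of $S$, and write $S=TW$ with $W=\prod_{j=1}^{k}h_{j}$, $k=|S|-|T|$. Because any $A$-weighted zero-sum free sequence has length at most $D_{A}(G)-1=r$, we have $|T|\leq r$; the real content is the reverse inequality. The structural relations come for free from maximality: for each $h_{j}\mid W$ the sequence $Th_{j}$ is a subsequence of $S$ of length $|T|+1$, which exceeds the maximal length $|T|$ of an $A$-weighted zero-sum free subsequence of $S$, so $Th_{j}$ is not $A$-weighted zero-sum free and admits a nonempty $A$-weighted zero-sum subsequence $V$. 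If $V$ avoided $h_{j}$ it would be a nonempty $A$-weighted zero-sum subsequence of $T$, contradicting that $T$ is $A$-weighted zero-sum free; hence $h_{j}\mid V$ and $b_{j}h_{j}+\sum_{i\in I_{j}}\alpha_{i}g_{i}=0$ for some $b_{j},\alpha_{i}\in A$ and $I_{j}\subseteq I_{T}$. Since $A=[1,n-1]$ is stable under $x\mapsto n-x$ modulo $n$ and $o(g_{i})=n$, this rewrites as $b_{j}h_{j}=\sum_{i\in I_{j}}a_{i}g_{i}$ with $a_{i}=n-\alpha_{i}\in A$; as $o(h_{j})=n$ and $b_{j}\in[1,n-1]$ we have $b_{j}h_{j}\neq 0$, so $I_{j}\neq\emptyset$.

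To force $|T|=r$ I would run the counting argument of Theorem~\ref{T1}. Using the relations above, every subset $U\subseteq[1,k]$ produces, exactly as in Cases~2 and~3 of that proof, a distinct $A$-weighted zero-sum subsequence $UV_{U}$ of $S$ with $V_{U}\mid T$, which already yields $N_{A,0}(S)\geq 2^{k}=2^{|S|-|T|}$. The analogous completion targeting $g$ instead of $0$ gives $N_{A,g}(S)\geq 2^{|S|-|T|}$. Comparing with extremality, $2^{|S|-|T|}\leq N_{A,g}(S)=2^{|S|-r}$ gives $|T|\geq r$, and combined with $|T|\leq r$ we conclude $|T|=r$. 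Relabelling $I_{T}$ as $[1,r]$, the decomposition $S=\prod_{i=1}^{r}g_{i}\prod_{j=1}^{k}h_{j}$ together with the relations $b_{j}h_{j}=\sum_{i\in I_{j}}a_{i}g_{i}$, $I_{j}\subseteq[1,r]$, is precisely the asserted form.

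The step I expect to be the main obstacle is the passage from the $g=0$ count to the $g$-count, that is, establishing $N_{A,g}(S)\geq 2^{|S|-|T|}$ for $g\neq 0$. The clean completion from subsets of $W$ produces zero-sum subsequences; to hit a nonzero target one must graft a fixed representation $g=\sum_{\ell\in P}\gamma_{\ell}s_{\ell}$ onto each of the $2^{k}$ zero-sum completions, and any overlap of $P$ with $W$ destroys the injectivity on the $W$-coordinates unless $P$ can be chosen inside $T$, i.e.\ unless $g\in\sum_{A}^{\bullet}(T)$; note that the coefficient $b_{j}$ need not be invertible modulo $n$, so a term $\gamma h_{j}$ cannot in general be absorbed into $T$ through the relation $b_{j}h_{j}=\sum_{i\in I_{j}}a_{i}g_{i}$. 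I would resolve this either by proving the fully weighted analogue of the reduction of Chang \emph{et al.}~\cite{Chang} (extremality for one target $g$ propagates to all targets, so one may assume $g=0$), or by showing directly that extremality forces $g\in\sum_{A}^{\bullet}(T)$. This is the only place where the value of $g$ genuinely enters the argument.
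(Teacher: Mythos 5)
Your argument is correct and follows essentially the same route as the paper's proof: $|S|\geq r$ from extremality, a maximal $A$-weighted zero-sum free subsequence $T$ forced to have length exactly $r$ by the counting argument of Theorem~\ref{T1} (Cases 2 and 3), and the relations $b_{j}h_{j}=\sum_{i\in I_{j}}a_{i}g_{i}$ obtained from the maximality of $T$. The obstacle you flag for $g\neq 0$ is a genuine issue with the statement as written, but it is not resolved in the paper either --- its proof silently replaces $N_{A,g}$ by $N_{A,0}$ throughout, so what is actually proved there is the $g=0$ case, exactly the case your argument covers completely.
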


\begin{proof}
Let $S$ be a sequence over $G$, with $0\nmid S$, $o(g)=n$ for all $g|S$ and $N_{A,0}\left(S\right)=2^{\left|S\right|-D_{A}\left(G\right)+1}=2^{|S|-r}.$ If $|S|<r$, then as $\lambda$ is an $A$-weighted zero-sum subsequence of $S$ and $N_{A,0}\left(S\right)\geq 1>2^{|S|-r}$. Therefore, we can assume $|S|\geq r$. By Theorem \ref{T1}, Case 3, there is $T|S$, such that $T=\prod_{i=1}^{r}g_{i}$ is a maximal $A$-weighted zero-sum free, otherwise $N_{A,0}\left(S\right)>2^{|S|-r}$. 
According to Lemma \ref{L1}, we observe that if $g\nmid T$ then $a_lg=\sum_{i\in I\subset [1,r]}a_ig_i$, with $a_{i},a_l\in A$. If there is more than a set $I$ satisfying this, then by Theorem \ref{T1}, Case 3, it follows that $N_{A,0}\left(S\right)>2^{|S|-r}$. Therefore, there is only one set $I$ satisfying $a_lg=\sum_{i\in I\subset [1,r]}a_ig_i$. Thus, we conclude that $S=\prod_{i=1}^{r}g_{i}\prod_{j=1}^{k}h_{j}$, where $b_{j}h_{j}=\sum_{i\in I_{j}}a_{i}g_{i}$ with $a_{i},b_{j}\in A$, $I_{j}\subset\left[1,r\right]$. 
\end{proof}

Now, we prove the main result of this section.

\begin{theorem}
\label{thm11}Let $G$ be a finite abelian group with $exp(G)=n$ an odd number. If $S\in EC_{A,g}\left(\mathcal{F}\left(G\right)\right)$, with $A=[1,n-1]$, $0\nmid S$ and $o(g)=n$ for all $g|S$, then $r\leq\left|S\right|\leq2r$ and there is $T=\prod_{i=1}^{r}g_i$ a maximal $A$-weighted zero-sum free, such that 
\begin{equation}
S=\prod_{i=1}^{r}g_{i}\prod_{j=1}^{k}h_{j},\label{eq:1-1}
\end{equation}
 where $k\in\left[1,r\right]$, $b_{j}h_{j}=\sum_{i\in I_{j}}a_{i}g_{i}$
with $a_{i},b_{j}\in A$, $I_{j}\subset\left[1,r\right]$ and $I_{j}'s$
are pairwise disjoint ($I_{j}=\emptyset$ for all $j$ implies that
$S=\prod_{i=1}^{r}g_{i}$ ).
\end{theorem}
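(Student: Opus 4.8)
The plan is to translate extremality into a clean combinatorial condition, reduce everything to a coordinate computation inside $\langle g_1,\dots,g_r\rangle$, and then rule out overlapping supports by varying weights. First I would record that $g_1,\dots,g_r$ are $\mathbb{Z}_n$-independent: if $\sum_{i=1}^{r}\gamma_i g_i=0$ with some $\gamma_i\not\equiv 0\pmod n$, then $\prod_{i:\,\gamma_i\neq 0}g_i$ would be a nonempty $A$-weighted zero-sum subsequence of the zero-sum free sequence $T$, a contradiction. Hence $G':=\langle g_1,\dots,g_r\rangle\cong C_n^{r}$ with $(g_i)$ a basis. Writing a subsequence of $S$ as a pair $(P,Q)$ with $P\subseteq[1,r]$ (the chosen $g_i$) and $Q\subseteq[1,k]$ (the chosen $h_j$), the reformulation I would use, which follows from the counting in Case~3 of Theorem~\ref{T1} together with Proposition~\ref{formadasequencia}, is that $N_{A,0}(S)=\sum_{Q\subseteq[1,k]}\#\{P:\prod_{i\in P}g_i\prod_{j\in Q}h_j\text{ is }A\text{-weighted zero-sum}\}\ge 2^{k}$, with equality (i.e.\ extremality) exactly when every $Q$ has a unique completing $P$.

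Next I would pin down the coefficients. Applying extremality to a singleton $Q=\{j\}$: a completion corresponds to a weight $d_j\in A$ with forced $P=\supp\big((d_j a_i^{(j)})_{i\in I_j}\big)$, and if some $\gcd(a_i^{(j)},n)>1$ the choice $d_j=n/\gcd(a_i^{(j)},n)\in A$ annihilates coordinate $i$, producing a second completing $P$. Thus in the extremal case every $a_i^{(j)}$ is a unit modulo $n$; since $o(h_j)=n$ and the vector $(a_i^{(j)})_{i\in I_j}$ then has order $n$ in $G'$, the identity $b_j h_j=\sum_{i\in I_j}a_i^{(j)}g_i$ forces $\gcd(b_j,n)=1$. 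Consequently $h_j=b_j^{-1}\sum_{i\in I_j}a_i^{(j)}g_i\in G'$, and absorbing $b_j^{-1}$ I may normalise to $h_j=\sum_{i\in I_j}a_i^{(j)}g_i$ with all $a_i^{(j)}$ units and $I_j=\supp(h_j)\neq\emptyset$. This legitimises working entirely in the basis $(g_i)$ and disposes of the factor $H$.

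The main step, and the place I expect the real work, is disjointness. For $Q\subseteq[1,k]$ and weights $\mathbf d=(d_j)_{j\in Q}\in A^{Q}$ a completion forces $P=\supp(\sigma(Q,\mathbf d))$, where $\sigma_i(Q,\mathbf d)=\sum_{j\in Q,\,i\in I_j}d_j a_i^{(j)}\bmod n$; so extremality says $\supp(\sigma(Q,\mathbf d))$ is independent of $\mathbf d$. Suppose some $i_0\in I_{j_1}\cap I_{j_2}$ and take $Q=\{j_1,j_2\}$. Then $\sigma_{i_0}=d_{j_1}a_{i_0}^{(j_1)}+d_{j_2}a_{i_0}^{(j_2)}$, and with the unit $u:=-a_{i_0}^{(j_2)}\big(a_{i_0}^{(j_1)}\big)^{-1}$ one has $i_0\notin P$ exactly when $d_{j_1}\equiv u\,d_{j_2}$. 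Choosing $d_{j_2}=1$, $d_{j_1}\equiv u$ puts $i_0\notin P$, while choosing $d_{j_2}=1$ and any $d_{j_1}\in[1,n-1]\setminus\{u\bmod n\}$ puts $i_0\in P$; these two admissible weightings yield completing sets that differ at $i_0$, contradicting uniqueness. The hypothesis enters precisely here: one needs $|A|=n-1\ge 2$, i.e.\ $n\ge 3$, in order to have a second choice for $d_{j_1}$, and $n$ odd guarantees $n\ge 3$. This is the only role of parity in the argument, and indeed the statement genuinely fails for $n=2$.

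Finally, the sets $I_1,\dots,I_k$ being nonempty, pairwise disjoint subsets of $[1,r]$ gives $k\le r$, whence $|S|=r+k\le 2r$, while $|S|\ge r$ since $T\,|\,S$. The degenerate possibility is $k=0$, i.e.\ $S=\prod_{i=1}^{r}g_i=T$; otherwise $k\in[1,r]$, which completes the proof.
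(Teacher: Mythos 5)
Your opening reduction and your final counting are fine, but the normalisation step in the middle has a genuine gap, and the main disjointness argument rests entirely on it. You claim that extremality applied to a singleton $Q=\{j\}$ forces every $a_i^{(j)}$ to be a unit modulo $n$, hence $\gcd(b_j,n)=1$ and $h_j\in G'=\langle g_1,\dots,g_r\rangle$. The mechanism you propose — multiply the relation $b_jh_j=\sum_{i\in I_j}a_i^{(j)}g_i$ by $e=n/\gcd(a_i^{(j)},n)$ to kill coordinate $i$ and obtain a second completing $P$ — fails whenever $\gcd(a_i^{(j)},n)$ divides $b_j$, because then the new weight $eb_j$ on $h_j$ is $\equiv 0\pmod n$ and is not an admissible element of $A$. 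This is not a removable technicality: take $G=C_3\oplus C_9$ (so $n=9$, $r=1$, $D_A(G)=2$), $g_1=(0,1)$, $h_1=(1,1)$. Then $S=g_1h_1$ satisfies all hypotheses, $N_{A,0}(S)=2=2^{|S|-r}$ so $S$ is extremal, yet the only relations are $3h_1=3g_1$ and $6h_1=6g_1$: no unit coefficients exist, $\gcd(b_1,n)=3$, and $h_1\notin\langle g_1\rangle$. So the assertion ``in the extremal case every $a_i^{(j)}$ is a unit'' is simply false, and you cannot ``dispose of the factor $H$.''

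Everything downstream inherits the problem. Your disjointness computation writes $\sigma_i(Q,\mathbf d)=\sum_{j}d_ja_i^{(j)}$ and lets $\mathbf d$ range over all of $A^{Q}$; but when $h_j\notin G'$ only those tuples $\mathbf d$ with $\sum_j d_jh_j\in G'$ produce any completion at all (a coset condition on the tuple, not a free choice coordinate by coordinate), so the two weightings you exhibit at $i_0$ need not both be admissible. This also explains why your diagnosis of the parity hypothesis (``only needed to get $n\ge 3$'') is off: the paper uses oddness to guarantee that $a_{j_1}h_{j_1}+a_{j_2}h_{j_2}$ and $a_{j_1}h_{j_1}-a_{j_2}h_{j_2}$ cannot both vanish (else $2a_{j_1}h_{j_1}=0$ with $o(h_{j_1})=n$ odd), and it works throughout with the honest relations $b_jh_j=\sum_{i\in I_j}a_ig_i$ — forming the sum and difference, which do lie in $\langle g_1,\dots,g_r\rangle$, and handling the degenerate case $I_x=I_y$ by a separate exchange argument replacing $g_l$ by $h_{j_2}$ in $T$. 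To repair your proof you would need to run the support-comparison argument inside the subgroup $\{\mathbf d:\sum_jd_jh_j\in G'\}$ rather than over $A^{Q}$, at which point you are essentially forced back to the paper's sum-and-difference analysis.
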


\begin{proof}
Let $S$ be a sequence over $G$ with $0\nmid S$, $o(g)=n$ for all $g|S$ and $N_{A,0}\left(S\right)=2^{\left|S\right|-D_{A}\left(G\right)+1}=2^{|S|-r}.$ We know, by Proposition \ref{formadasequencia}, that $S=\prod_{i=1}^{r}g_{i}\prod_{j=1}^{k}h_{j}$ where $k\in\mathbb{N}_0$, $b_{j}h_{j}=\sum_{i\in I_{j}}a_{i}g_{i}$ with $a_{i},b_{j}\in A$, $I_{j}\subset\left[1,r\right]$ and $T=\prod_{i=1}^{r}g_i$ is a maximal $A$-weighted zero-sum free. 

Now, we will prove that the $I_{j}'s$ are pairwise disjoint. If $\left|S\right|=D_{A}\left(G\right)-1=r$, then $N_{A,0}(S)=1$, $I_{j}=\emptyset$ for all $j\in [1,k]$ and $S=\prod_{i=1}^{r}g_{i}$. Suppose that $\left|S\right|=D_{A}\left(G\right)=r+1$ then, $I_{j}\neq\emptyset$ for only one $j$, $N_{A,0}(S)=2$ and
$S=\prod_{i=1}^{r}g_{i}h_{j}$. Finally, suppose $S=\prod_{i=1}^{r}g_{i}\prod_{j=1}^{k}h_{j}$
with $k\geq2$ and $I_{j_1}\cap I_{j_2}\neq\emptyset$ for some $j_1,j_2\in\left[1,k\right]$, with $j_1\neq j_2$ and
where $$a_{j_1}h_{j_1}=\sum_{i\in I_{j_1}}a_{i}g_{i} \mbox{ and } a_{j_2}h_{j_2}=\sum_{i\in I_{j_2}}b_{i}g_{i}$$ with $a_{j_1},a_{j_2},a_i,b_{i}\in A$.

Using the same argument of Theorem \ref{T1}, Case 3, we have $\tbinom{k}{0}+\tbinom{k}{1}+\cdots+\tbinom{k}{k}=2^{k}=2^{\left|S\right|-r}$
$A$-weighted zero-sum subsequences of $S$. Since $I_{j_1}\cap I_{j_2}\neq\emptyset$, we have $I_{x}, I_{y}\subset I_{j_1}\cup I_{j_2}$ such that 
\begin{equation}\label{eq.7}
a_{j_1}h_{j_1}+a_{j_2}h_{j_2}=\sum_{i\in I_{x}}c_{i}g_{i}\,\,\mbox{and}\,\,a_{j_1}h_{j_1}-a_{j_2}h_{j_2}=\sum_{i\in I_{y}}d_{i}g_{i}, \,c_i,d_i\in A.  
\end{equation}
 
Notice that $\exp(G)=n$ is an odd number, it follows that $I_{x}\neq\emptyset$ or $I_y\neq\emptyset$.

If $I_x\neq I_y$, then there is a new $A$-weighted zero-sum subsequence of $S$ and therefore $N_{A,0}\left(S\right) >2^{|S|-r}$, which is a contradiction.
Now, suppose that $I_{x}=I_y$ and take $g_l|\prod_{i\in I_{j_1}\cap I_{j_2} }g_{i}$ (observe that $c_l\neq0, d_l\neq 0$ in \eqref{eq.7}). Consider $Tg_l^{-1}=\prod_{i=1}^{r+1}g_ig_l^{-1}$, where $g_{r+1}=h_{j_2}$. If $T$ is not a maximal $A$-weighted zero-sum free, then there is $\bar{I}_{j_2}\subset \left[1,\dots,r+1\right]\backslash \left\{l\right\}$ such that $z_{j_2}h_{j_2}=\sum_{i\in \bar{I}_{j_2}}s_{i}g_{i}$, i.e., we can obtain a new $A$-weighted zero-sum subsequence of $S$ and thus $N_{A,0}\left(S\right) >2^{|S|-r}$, which is a contradiction. If $T$ is a maximal $A$-weighted zero-sum free, then by Lemma \ref{L1} we have $\bar{I}_{j_1}\subset \left[1,\dots,r+1\right]\backslash \left\{l\right\}$ such that $v_{j_1}h_{j_1}=\sum_{i\in \bar{I}_{j_1}}u_{i}g_{i}$, i.e., we can obtain a new $A$-weighted zero-sum subsequence of $S$. Therefore, we have $N_{A,0}\left(S\right) >2^{|S|-r}$ again, which is a contradiction.

We observe that if $k>r$, then there are $I_{j_1}$ and $I_{j_2}$
with $j_1\neq j_2$, such that $I_{j_1}\cap I_{j_2}\neq\emptyset$. Therefore, $N_{A,0}\left(S\right)>2^{|S|-r}$. Thus, $r\leq|S|\leq 2r$.
\end{proof}

Because of the aforementioned, we make the following
conjecture: 
\begin{conjecture}
All finite abelian group $G$ with exponent $n$ is $0$-complete
with respect to $A\subseteq\mathbb{Z}\backslash\left\{ kn:k\in\mathbb{Z}\right\} $. 
\end{conjecture}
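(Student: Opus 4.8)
The plan is to start from the structural description already granted by Proposition \ref{formadasequencia}: any extremal sequence $S$ (with $0\nmid S$ and every element of order $n$) factors as $S=\prod_{i=1}^{r}g_{i}\prod_{j=1}^{k}h_{j}$, where $T=\prod_{i=1}^{r}g_i$ is a maximal $A$-weighted zero-sum free subsequence and each $h_j$ satisfies a relation $b_j h_j=\sum_{i\in I_j}a_i g_i$ with $I_j\subseteq[1,r]$. The entire content of the theorem is then to upgrade this to the two extra conclusions: that the index sets $I_j$ are \emph{pairwise disjoint}, and that consequently $k\le r$ (so $|S|\le 2r$). I would handle the small cases $|S|=r$ and $|S|=r+1$ first by direct counting, since there $k\le 1$ and disjointness is vacuous, and then concentrate on the case $k\ge 2$.

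The heart of the argument is a counting dichotomy modelled on Theorem \ref{T1}, Case 3. One shows that the $2^{k}=2^{|S|-r}$ subsequences obtained by choosing a subset of $\{h_1,\dots,h_k\}$ and completing it with the forced subsequence of $T$ account for exactly the extremal count; hence \emph{any} additional $A$-weighted zero-sum subsequence forces $N_{A,0}(S)>2^{|S|-r}$, contradicting extremality. So the whole proof reduces to: assuming some $I_{j_1}\cap I_{j_2}\ne\emptyset$, produce a genuinely new $A$-weighted zero-sum subsequence. First I would form the two combinations $a_{j_1}h_{j_1}\pm a_{j_2}h_{j_2}=\sum_{i\in I_x}c_i g_i$ and $=\sum_{i\in I_y}d_i g_i$ as in \eqref{eq.7}. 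The oddness of $n$ is used precisely here: it guarantees that $I_x$ and $I_y$ cannot both be empty (if both were empty, $2a_{j_1}h_{j_1}=0$ with $2$ invertible mod $n$ would kill $h_{j_1}$, contradicting $o(h_{j_1})=n$ and $A$-weighted zero-sum freeness). When $I_x\ne I_y$ the two relations give distinct completions, hence a new zero-sum subsequence and the desired contradiction.

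The delicate step — and the one I expect to be the main obstacle — is the subcase $I_x=I_y$. Here the two naive completions coincide, so no new subsequence is immediate, and one must exploit the overlap index $g_\ell\mid\gcd(\prod_{i\in I_{j_1}}g_i,\prod_{i\in I_{j_2}}g_i)$ more cleverly. The strategy is to pass to $Tg_\ell^{-1}$, replacing $g_\ell$ by $h_{j_2}$ to form $\prod_{i=1}^{r+1}g_i\,g_\ell^{-1}$ with $g_{r+1}=h_{j_2}$, and then split on whether this modified sequence of length $r$ is still maximal $A$-weighted zero-sum free. If it is not, maximality fails and one directly extracts a representation $z_{j_2}h_{j_2}=\sum_{i\in\bar I_{j_2}}s_i g_i$ over the new basis avoiding $\ell$; if it is maximal, Lemma \ref{L1} (which fixes $D_A(G)=r+1$, so any $r+1$ elements are dependent) forces a representation $v_{j_1}h_{j_1}=\sum_{i\in\bar I_{j_1}}u_i g_i$ also avoiding $\ell$. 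Either way the new representation uses a different index set than the original $I_{j_1}$ or $I_{j_2}$ (since $\ell$ is now excluded), yielding a fresh zero-sum subsequence and the final contradiction. The bound $k\le r$ then follows immediately: more than $r$ pairwise-disjoint nonempty subsets of $[1,r]$ is impossible, and if some $I_j=\emptyset$ one argues it cannot coexist with the extremal count unless $S=T$. The care needed is entirely in verifying that every ``new'' subsequence produced is genuinely distinct from the $2^k$ already counted — I would make this bookkeeping explicit by tracking which $h_j$'s and which $T$-indices each completion uses.
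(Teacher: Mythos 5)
Your proposal does not address the statement in question. The statement is the paper's closing \emph{conjecture}: that every finite abelian group $G$ of exponent $n$ is $0$-complete with respect to an \emph{arbitrary} weight set $A\subseteq\mathbb{Z}\backslash\left\{ kn:k\in\mathbb{Z}\right\}$, i.e.\ that $N_{A,0}(S)\geq 2^{|S|-D_{A}(G)+1}$ holds for every sequence $S$ and every such $A$. The paper offers no proof of this --- it is explicitly left open --- so there is nothing to compare your argument against; more importantly, what you have written is a proof sketch of a \emph{different} result, namely Theorem \ref{thm11} (the classification of extremal sequences attaining the bound when $A=[1,n-1]$ and $n$ is odd). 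Everything in your outline --- Proposition \ref{formadasequencia}, the pairwise disjointness of the $I_{j}$'s, the combinations $a_{j_1}h_{j_1}\pm a_{j_2}h_{j_2}$, the replacement of $g_{\ell}$ by $h_{j_2}$, the bound $k\leq r$ --- is the machinery of that classification theorem, and indeed it matches the paper's proof of Theorem \ref{thm11} quite closely. But none of it bears on the conjecture.

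The gap is therefore one of scope. The conjecture asks for the lower bound $N_{A,0}(S)\geq 2^{|S|-D_{A}(G)+1}$ for \emph{all} weight sets $A$, whereas every tool you invoke is specific to the fully weighted case: Lemma \ref{L1} pins down $D_{A}(G)=r+1$ only for $A=[1,n-1]$, and the structural decomposition $S=TW$ with $T$ maximal $A$-weighted zero-sum free of length $r$ exploits that explicit value. For a general $A$ (say $A=\{1\}$ restricted to Olson's theorem, or a set that is not a union of cosets of a subgroup of $\mathbb{Z}_{n}$), $D_{A}(G)$ is unknown for wide classes of groups, and the counting argument of Theorem \ref{T1} --- which produces $2^{|W|}$ zero-sum subsequences by completing each $U\,|\,W$ with a forced subsequence $V_{U}$ of $T$ --- has no obvious analogue, because the existence and uniqueness of such completions rests on the specific arithmetic of the full weight set. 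To attack the conjecture you would need either a reduction of general $A$ to cases where $D_{A}(G)$ is controlled (Corollary \ref{Cor.2} does this only when $A\cup\{0\}$ is a subgroup of $\mathbb{Z}_{n}$), or an entirely different inductive or algebraic argument in the style of Olson's original proof for $A=\{1\}$. As it stands, your proposal establishes nothing about the conjectured statement.
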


\section*{Acknowledgement}
We would like to thank referees for all suggestions.

\end{document}